\newtheoremstyle{ptheorem}{1em}{0em}{\itshape}{}{\bfseries}{.}{.5em}{\thmname{#1}\thmnumber{ #2}\thmnote{ (\hspace{-.01pt}{#3})}}
\theoremstyle{ptheorem}
\newtheorem{thm}{Theorem}[section]
\newtheorem{pro}[thm]{Proposition}
\newtheorem{lem}[thm]{Lemma}
\newtheorem{cor}[thm]{Corollary}
\newtheoremstyle{hdef}{1em}{0em}{}{}{\bfseries}{.}{.5em}{\thmname{#1}\thmnumber{ #2}\thmnote{ (\hspace{-.01pt}{#3})}}
\theoremstyle{hdef}
\newtheorem{rem}[thm]{Remark}
\newtheoremstyle{premark}{1em}{0em}{
}{}{\scshape}{.}{.5em}{}
\theoremstyle{premark}
\newtheorem{exa}[thm]{Example}
\numberwithin{equation}{section}
\numberwithin{figure}{section}
\DeclareMathOperator{\dif}{d}
\newcommand{\cC}{{\mathcal C}}
\newcommand{\bR}{{\mathbb R}}
\newcommand{\bZ}{{\mathbb Z}}
\renewcommand{\a}{\alpha}
\renewcommand{\l}{\lambda}
\renewcommand{\phi}{\varphi}
\newcommand{\ol}{\overline}
\newcommand{\fa}{\forall}
\newcommand{\nkp}{\enskip}
\newcommand{\sfa}{\nkp\fa}
\renewcommand{\d}{\delta}
\renewcommand{\(}{\left(}
\renewcommand{\)}{\right)}
\newcommand{\olb}[1]{%
	\vbox{\offinterlineskip\ialign{\hfil##\hfil\cr $\rotatebox[origin=c]{90}{$]$}$\cr\noalign{\kern-.45ex}{$#1$}\cr}}}
\begin{document}
	\title{Solutions of the first order linear equation with reflection and general linear conditions\footnote{Partially supported by  Ministerio de Econom\'ia y Competitividad (Spain) project MTM2013-43014-P and Xunta de Galicia (Spain), project EM2014/032.}}
	
	\author{
		Alberto Cabada and F. Adri\'an F. Tojo\footnote{Supported by  FPU scholarship, Ministerio de Educaci\'on, Cultura y Deporte (Spain).} \\
		\normalsize
		Departamento de An\'alise Ma\-te\-m\'a\-ti\-ca, Facultade de Matem\'aticas,\\ 
		\normalsize Universidade de Santiago de Com\-pos\-te\-la, Spain.\\ 
		\normalsize e-mail: \textit{alberto.cabada@usc.es, fernandoadrian.fernandez@usc.es}\\
	}
	\date{}
	
	\maketitle

\begin{abstract}
This work is devoted to the study of first order linear problems with involution and general linear conditions. We first study the problem in the case of antiperiodic boundary conditions, giving an explicit Green's function for it. Then we move forward to more general linear boundary conditions, focusing on sufficient conditions for existence and uniqueness of solution. At the end of the paper we give estimates that ensure the positivity of the solution in the general problems and illustrate these applications with examples.
\end{abstract}

\noindent {\bf Keywords:}  Equations with involutions. Equations with reflection. Green's functions.  Maximum principles. Comparison principles. Periodic conditions.

\pagestyle{headings}
\section{Introduction}

The study of functional differential equations with involutions (DEI) can be traced back to the solution of the equation $x'(t)=x(1/t)$ by Silberstein (see \cite{Sil}) in 1940. Briefly speaking, an involution is just a function $f$ that satisfies $f(f(x))=x$ for every $x$ in its domain of definition. For most applications in analysis, the involution is defined on an interval of $\bR$ and, in the majority of the cases, it is continuous, which implies it is decreasing and has a unique fixed point \cite{Wie}. Ever since that foundational paper of Siberstein, the study of problems with DEI has been mainly focused on those cases with initial conditions, with an extensive research in the case of the reflection $f(x)=-x$.\par
Wiener and Watkins study in \cite{Wie} the solution of the equation $x'(t)-a\, x(-t)=0$ with initial conditions. Equation $x'(t)+a\, x(t)+b\,x(-t)=g(t)$ has been treated by Piao in \cite{Pia, Pia2}. In \cite{Kul, Sha, Wie, Wat1, Wie2} some results are introduced to transform this kind of problems with involutions and initial conditions into second order ordinary differential equations with initial conditions or first order two dimensional systems, granting that the solution of the last will be a solution to the first. Furthermore, asymptotic properties and boundedness of the solutions of initial first order problems are studied in \cite{Wat2} and \cite{Aft} respectively. Second order boundary value problems have been considered in \cite{Gup, Gup2, Ore2, Wie2} for Dirichlet and Sturm-Liouville boundary value conditions and higher order equations has been studied in \cite{Ore}. Other techniques applied to problems with reflection of the argument can be found in \cite{And, Ma, Wie1}.\par
More recently, the papers of Cabada et al. \cite{Cab4, Cab5} have further studied the case of the first order linear equation with two-point boundary conditions, adding a new element to the previous studies: the existence of a Green's function. Once the study of the sign of the aforementioned function is done, maximum and anti-maximum principles trivially follow.\par
In this paper we try to extend the results in \cite{Cab4} to a more general kinds of boundary conditions for the equation $x'(t)+m\,x(-t)=h(t)$.\par
The structure of this paper is as follows. In Section 1 we will study the antiperiodic case. As it will be shown, the properties and results for this case run parallel to the peridic case, so the results are much similar to the ones found in \cite{Cab4}.\par
In section 2 we go an step further and try to solve the equation for conditions given by a linear functional acting on the solution. Then, in Section 3, we apply our knowledge of the Green's function of the periodic case in order to derive new positivity results. Finally, these results are illustrated with examples.

\subsection{The antiperiodic case}

In \cite{Cab4,Cab5} the following periodic problem was studied
\begin{equation}\label{eq1} x'(t)+m\,x(-t)=h(t),\ t\in I;\quad x(-T)=x(T).\end{equation}
where $m\in\bR$, $T\in\bR^+$, $h\in L^1(I)$ and $I:=[-T,T]$.\par
The main result of \cite{Cab4} was the following.
\begin{pro}\label{Greenf} Suppose that $m \neq k \, \pi/T$, $k \in \bZ$. Then problem (\ref{eq1}) has a unique solution given by the expression
\begin{equation}
\label{e-u}
u(t):=\int_{-T}^T\ol G(t,s)h(s)\dif s,
\end{equation}
where $$\ol{G}(t,s):=m\,G(t,-s)-\frac{\partial G}{\partial s}(t,s),$$
is called the \textbf{Green's function} related to problem (\ref{eq1}).
\end{pro}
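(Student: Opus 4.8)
The plan is to reduce the first order equation with reflection to a classical second order linear ordinary differential equation, whose Green's function $G$ is the object already appearing in the statement, and then to recover $\ol G$ from that reduction. First I would differentiate the identity $x'(t)+m\,x(-t)=h(t)$ and, in parallel, replace $t$ by $-t$ in it to obtain $x'(-t)+m\,x(t)=h(-t)$. Using the second identity to eliminate $x'(-t)$ from the derivative of the first yields
\[
x''(t)+m^2\,x(t)=h'(t)+m\,h(-t),
\]
a constant coefficient equation whose homogeneous solutions are spanned by $\cos(mt)$ and $\sin(mt)$. This already suggests why the resonance set is $m=k\pi/T$.

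The step I expect to be the main obstacle is the equivalence between the two problems, since differentiation may create spurious solutions. To control this I would introduce $\phi(t):=x'(t)+m\,x(-t)-h(t)$ and check, using that $x$ solves the second order equation, that $\phi$ itself satisfies the homogeneous reflection equation $\phi'(t)=-m\,\phi(-t)$. A direct computation shows that the solutions of $y'(t)+m\,y(-t)=0$ form the one dimensional space generated by $\cos(mt)-\sin(mt)$; hence $\phi\equiv0$ is equivalent to a single scalar condition, and $x$ solves (\ref{eq1}) if and only if it solves the second order equation together with $x(-T)=x(T)$ and this one compatibility condition. In particular the first order homogeneous problem has the one parameter family $x(t)=A\,(\cos(mt)-\sin(mt))$ as solutions, and imposing $x(-T)=x(T)$ reduces to $A\sin(mT)=0$. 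Thus under the hypothesis $m\neq k\pi/T$ the homogeneous problem has only the trivial solution, which settles the uniqueness part at once.

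For existence I would write the solution of the reduced second order problem through its Green's function $G$, namely as $\int_{-T}^T G(t,s)\,[h'(s)+m\,h(-s)]\,\dif s$, and then simplify. Integrating by parts the term carrying $h'$ turns $\int_{-T}^T G(t,s)\,h'(s)\,\dif s$ into $-\int_{-T}^T \partial_s G(t,s)\,h(s)\,\dif s$ plus boundary contributions, while the change of variables $s\mapsto-s$ (legitimate since $[-T,T]$ is symmetric) turns $m\int_{-T}^T G(t,s)\,h(-s)\,\dif s$ into $m\int_{-T}^T G(t,-s)\,h(s)\,\dif s$. Collecting the two integrals produces precisely the kernel
\[
\ol G(t,s)=m\,G(t,-s)-\frac{\partial G}{\partial s}(t,s),
\]
so that $u(t)=\int_{-T}^T\ol G(t,s)\,h(s)\,\dif s$.

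The delicate point in this last step, and the place where I expect most of the work to concentrate, is the bookkeeping of the boundary terms. The reduction forces the second order problem to carry boundary data that depend on $h$ (the first order equation evaluated at $\pm T$ relates $x'(T)-x'(-T)$ to $h(T)-h(-T)$), so the boundary contributions produced by the integration by parts are not automatically zero and must be reconciled with the boundary conditions encoded in $G$. I would therefore close the argument by verifying directly that the candidate $u$ satisfies both $x'(t)+m\,x(-t)=h(t)$ and $x(-T)=x(T)$, that is, that the compatibility condition isolated above holds automatically for this $u$; differentiating under the integral sign and using the jump and boundary relations defining $G$ reduces the verification to an identity for $G$, after which the uniqueness already obtained guarantees that $u$ is the solution.
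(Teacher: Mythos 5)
Your proposal is correct and takes essentially the same route as the paper (more precisely, as the proof in \cite{Cab4}, which this paper cites for Proposition \ref{Greenf} and mirrors in its antiperiodic section): reduce the reflection equation to the periodic harmonic oscillator, extract the kernel $\ol G(t,s)=m\,G(t,-s)-\frac{\partial G}{\partial s}(t,s)$ from the second-order Green's function, and conclude by directly verifying that the candidate $u$ solves the original problem, with uniqueness coming from the one-dimensional solution space of $x'(t)+m\,x(-t)=0$. The only caveat is that for $h\in L^1(I)$ the step producing $h'$ is purely formal, so, exactly as you anticipate, the closing verification via the jump, boundary and symmetry properties of $G$ (properties $(I)$--$(X)$ of \cite{Cab4}, the analogues of $(A_1)$--$(A_{10})$ here) is what actually carries the existence part.
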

The expression of $\ol G$ was found in \cite{Cab4} and, expressed in a simpler form in \cite{Cab5} with the change of variables $t=z\,T$, $s=y\, T$  and $\a=m\,T$.
\begin{equation}\label{eqgb2}
\sin(\alpha) \ol G(z,y)=
\begin{cases}
\cos[\alpha(1-z)-\frac{\pi}{4}]\cos(\alpha y-\frac{\pi}{4})& \text{if}\quad z>|y|, \\
\cos(\alpha z+\frac{\pi}{4})\cos[\alpha(y-1)-\frac{\pi}{4}] & \text{if}\quad |z|<y, \\
\cos(\alpha z+\frac{\pi}{4})\cos[\alpha(1+y)-\frac{\pi}{4}]& \text{if}\quad -|z|>y, \\
\cos[\alpha(z+1)+\frac{\pi}{4}]\cos(\alpha y-\frac{\pi}{4})& \text{if}\quad z<-|y|,
\end{cases}\quad z,y\in[-1,1]
\end{equation}
On the light of these results, in this section we try to solve the analogous problem
$$x'(t)+m\,x(t)=h(t),\quad x(-T)+x(-T)=0.$$
The development of this section is parallel to the results in \cite{Cab4} for, instead of studying the periodic case, we study the antiperiodic one. The proofs of the following results are analogous to the ones in \cite{Cab4}, so we omit them here.\par
If we consider the antiperiodic problem
\begin{equation}\label{anti} x'(t)+m\,x(-t)=h(t),\quad x(-T)+x(T)=0,\end{equation}
we have that the reduced problem for $h\equiv 0$ corresponds with the harmonic oscillator with antiperiodic boundary value conditions
\begin{equation*}\label{oa-anti} x''(t)+m^2\,x(t)=0,\quad x(-T)+x(T)=0,\quad x'(-T)+x'(T)=0\end{equation*}
of which the Green's function, $H$, is given by the expression
$$2m\cos(m\,T)H(t,s)=\begin{cases} \sin m(t-s-T) & \text{ if } -T\le s\le t\le T, \\ \sin m(s-t-T) & \text{ if } -T\le t<s\le T.\end{cases}$$\par
It is straight forward to check that the following properties are fulfilled.\par
$(A_1)$ $H\in\cC(I^2,\bR)$.\par
$(A_2)$ $\frac{\partial H}{\partial t}$ y $\frac{\partial^2 H}{\partial t^2}$ exist and are continuous on $I^2\backslash D$ where $D:=\{(t,s)\in I^2\ :\ t=s\}$.\par
Also,
$$2m\cos(m\,T)\frac{\partial H}{\partial t}(t,s)=\begin{cases} m\cos m(t-s-T) & \text{ if } -T\le s\le t\le T, \\ -m\cos m(s-t-T) & \text{ if } -T\le t<s\le T,\end{cases}$$
$$\lim_{s\to t^-}2m\cos(m\,T)\frac{\partial H}{\partial t}(t,s)=m\cos m\,T,$$
$$\lim_{s\to t^+}2m\cos(m\,T)\frac{\partial H}{\partial t}(t,s)=-m\cos m\,T,$$
hence\par
$(A_3)$ $\frac{\partial H}{\partial t}(t,t^-)-\frac{\partial H}{\partial t}(t,t^+)=1\sfa t\in I$.\par
Furthermore, we have the following \par
$(A_4)$ $\frac{\partial^2 H}{\partial t^2}(t,s)+m^2H(t,s)=0\sfa(t,s)\in I^2\backslash D$.\par
$(A_5)$\quad \begin{minipage}[t]{0.8\linewidth}\begin{itemize} \item[$a)$]  $H(T,s)+H(-T,s)=0\sfa s\in I,$
\item[$b)$] $\frac{\partial H}{\partial t}(T,s)+\frac{\partial H}{\partial t}(-T,s)=0\sfa s\in I.$
\end{itemize}\end{minipage}\par
For every $t,\,s\in I$, we have that \par
$(A_6)$ $H(t,s)=H(s,t).$\par
$(A_7)$ $H(t,s)=H(-t,-s).$\par
$(A_8)$ $\frac{\partial H}{\partial t}(t,s)=\frac{\partial H}{\partial s}(s,t).$\par
$(A_9)$ $\frac{\partial H}{\partial t}(t,s)=-\frac{\partial H}{\partial t}(-t,-s).$\par
$(A_{10})$ $\frac{\partial H}{\partial t}(t,s)=-\frac{\partial H}{\partial s}(t,s).$\par
The properties $(A_1)-(A_{10})$ are equivalent to the properties $(I)-(X)$ in \cite{Cab4}. This allows us to prove the following proposition analogously to \cite[Proposition 3.2]{Cab4}.
\begin{pro}\label{Greenfa} Assume $m \neq (k+\frac{1}{2}) \, \frac{\pi}{T}$, $k \in \bZ$. Then problem (\ref{anti}) has a unique solution
\begin{equation*}
u(t):=\int_{-T}^T\ol H(t,s)h(s)\dif s,
\end{equation*}
where $$\ol{H}(t,s):=m\,H(t,-s)-\frac{\partial H}{\partial s}(t,s)$$
is the Green's function \index{función de Green} relative to problem (\ref{anti}).
\end{pro}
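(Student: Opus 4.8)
The plan is to verify directly that the function $u$ given by the stated formula solves problem (\ref{anti}), and then to prove uniqueness by reduction to the associated second order problem. Because $\ol H$ is manufactured from the harmonic-oscillator Green's function $H$ exactly as $\ol G$ is manufactured from $G$ in \cite{Cab4}, and because the list $(A_1)$--$(A_{10})$ reproduces the properties $(I)$--$(X)$ used there, the whole argument transcribes, with only the signs in the antiperiodic relations $(A_5)$ and $(A_9)$ to be watched. It is convenient first to rewrite the kernel via $(A_{10})$ as $\ol H(t,s)=m\,H(t,-s)+\partial_t H(t,s)$, and to note that $(A_1)$ and $(A_2)$ guarantee $\ol H(t,\cdot)\in L^\infty(I)$, so that $u$ is well defined for every $h\in L^1(I)$ and may be differentiated under the integral sign away from the diagonal $D$.

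The core step is the computation of $u'(t)+m\,u(-t)$. Differentiating $u(t)=\int_{-T}^T[m\,H(t,-s)+\partial_t H(t,s)]h(s)\,\dif s$ and splitting the second integral at $s=t$, the only non-integral contribution is the jump of $\partial_t H$ across $D$, which by $(A_3)$ equals exactly $h(t)$; this is the free term. In $m\,u(-t)$ the identity $(A_7)$ turns $H(-t,-s)$ into $H(t,s)$ and the antisymmetry $(A_9)$ turns $\partial_t H(-t,s)$ into $-\partial_t H(t,-s)$, so that the two integrals carrying $\partial_t H(t,-s)$ cancel between $u'(t)$ and $m\,u(-t)$. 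What remains is $h(t)+\int_{-T}^T[\partial_t^2 H(t,s)+m^2H(t,s)]h(s)\,\dif s$, and the bracket vanishes off $D$ by $(A_4)$; hence $u'(t)+m\,u(-t)=h(t)$.

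For the boundary condition I would write $u(T)+u(-T)=\int_{-T}^T[\ol H(T,s)+\ol H(-T,s)]h(s)\,\dif s$ and check that the kernel sum vanishes: in the form $\ol H(t,s)=m\,H(t,-s)+\partial_t H(t,s)$, the pieces $H(T,-s)+H(-T,-s)$ and $\partial_t H(T,s)+\partial_t H(-T,s)$ are annihilated by parts $a)$ and $b)$ of $(A_5)$ respectively, so $u(-T)+u(T)=0$. Uniqueness then follows from linearity: it suffices to show that $x'(t)+m\,x(-t)=0$, $x(-T)+x(T)=0$ forces $x\equiv0$. Differentiating the equation and substituting $x'(-t)=-m\,x(t)$ (the equation evaluated at $-t$) gives $x''+m^2x=0$, while evaluating the equation at $\pm T$ and adding yields $x'(-T)+x'(T)=0$; thus $x$ solves the homogeneous antiperiodic oscillator problem, whose general solution for $m\neq0$ is $A\cos(mt)+B\sin(mt)$ and meets both antiperiodic conditions only if $A\cos(mT)=0$ and $Bm\cos(mT)=0$. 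The hypothesis $m\neq(k+\tfrac12)\pi/T$ gives $\cos(mT)\neq0$, forcing $A=B=0$ (the case $m=0$ being immediate); this is precisely the condition that removes the factor $\cos(mT)$ from the denominator of $H$.

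The main obstacle is purely the bookkeeping in the differentiation: one must split at both $s=t$ and $s=-t$ (the reflected term $H(t,-s)$ is what carries information across the antidiagonal), keep the correct one-sided limits supplied by $(A_3)$, and apply the antiperiodic sign identities $(A_5)$ and $(A_9)$ without slips. Once these are handled, every remaining manipulation is a line-by-line copy of the periodic computation in \cite{Cab4}.
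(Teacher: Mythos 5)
Your proposal is correct and takes essentially the same approach as the paper: the paper omits this proof, asserting that properties $(A_1)$--$(A_{10})$ make it analogous to \cite[Proposition 3.2]{Cab4}, and your argument is precisely that analogue written out. Both your verification step (differentiation under the integral using the jump condition $(A_3)$, cancellation of the reflected terms via $(A_7)$ and $(A_9)$, the equation $(A_4)$, and the boundary identities $(A_5)$) and your uniqueness step (reduction of the homogeneous problem to the antiperiodic harmonic oscillator, nondegenerate exactly when $\cos(mT)\neq 0$) are the intended transcription of the periodic-case proof.
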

The Green's function $\ol H$ has the following explicit expression:
\begin{equation*}\label{hbarra}
2\cos(mT)\ol H(t,s)=\begin{cases} \sin m(-T+s+t)+\cos m(-T-s+t) & \text{si}\quad t>|s|,\\\sin m(-T+s+t)-\cos m(-T+s-t) & \text{si}\quad |t|<s,\\\sin m(-T-s-t)+\cos m(-T-s+t) & \text{si}\quad -|t|>s,\\\sin m(-T-s-t)-\cos m(-T+s-t) & \text{si}\quad t<-|s|.\end{cases}
\end{equation*}
The following properties of $\ol H$ hold and are equivalent to properties $(I')-(V')$ of the Green's function $\ol G$ of the periodic problem in \cite{Cab4}.\par
$(A'_1)$ $\frac{\partial \ol H}{\partial t}$ exists and is continuous on $I^2\backslash D$,\par
$(A'_2)$ $\ol H(t,t^-)$ y $\ol H(t,t^+)$ exist for all $t\in I$ and satisfy $\ol H(t,t^-)-\ol H(t,t^+)=1\sfa t\in I$,\par
$(A'_3)$ $\frac{\partial\ol H}{\partial t}(t,s)+m\,\ol H(-t,s)=0$ a.\,e. $t,s\in I,\ s\ne t$,\par
$(A'_4)$ $\ol H(T,s)+\ol H(-T,s)=0\sfa s\in (-T,T)$,\par
$(A'_5)$ $\ol H(t,s)=\ol H(-s,-t)\sfa t,s\in I$.\par

Despite the parallelism with the periodic problem, we cannot generalize the maximum and anti-maximum results of \cite{Cab4} because property $(A'_4)$ guarantees that $\ol H(\cdot,s)$ changes sign for a.\,e. $s$ and, by property $(A'_5)$, that $\ol H(t,\cdot)$ changes sign for a.e. $t$ fixed.

\subsection{The general case}

In this section we study equation $x'(t)+m\,x(t)=h(t)$ under the conditions imposed by a linear functional $F$, this is, we study the problem
\begin{equation}\label{progen} x'(t)+m\,x(-t)=h(t),\quad F(x)=c,\end{equation}
where $c\in\bR$ and $F\in W^{1,1}(I)'$.\par
Remember that that $W^{1,1}(I):=\{f:I\to\bR\ :\ f'\in L^1\}$ and we denote by $W^{1,1}(I)'$ its dual. Also, we will denote by $\cC_c(I)$ the space of compactly supported functions on $I$.\par
Recall that the solutions of equation $x''(t)+m^2 x(t)=0$ are parametrized by two real numbers $a$ and $b$ in the following way: $u(t)=a\cos m\,t+b\sin m\,t$. Since every solution of equation $x'(t)+m\,x(-t)=0$ has to be of this form, if we impose the equation to be satisfied, we obtain a relationship between the parameters: $b=-a$, and hence the solutions of $x'(t)+m\,x(-t)=0$ are given by $u(t)=a(\cos m\,t-\sin m\,t)$, $a\in\bR$.\par
We now recall the following result from \cite{Cab4}.
\begin{lem}[{\cite[Corollary 3.4]{Cab4}}]\label{corlambda}
Suppose that $m \neq k \, \pi/T$, $k \in \bZ$. Then the problem 
\begin{equation*}\label{prolambda}
 x'(t)+m\, x(-t)=h(t),\nkp t\in I:=[-T,T],\quad  x(-T)-x(T)=\l,
\end{equation*}
with $\l\in\bR$ has a unique solution given by the expression
\begin{equation}\label{solgen}
u(t):=\int_{-T}^T\ol G(t,s)h(s)\dif s+\l\ol G(t,-T).
\end{equation}
\end{lem}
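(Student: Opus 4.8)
The plan is to exploit the linearity of the problem and reduce it to the periodic case already settled in Proposition \ref{Greenf}. I would write any prospective solution as a superposition $u=u_p+u_h$, where $u_p(t):=\int_{-T}^T\ol G(t,s)h(s)\dif s$ is the solution furnished by Proposition \ref{Greenf}, so that $u_p$ solves $x'(t)+m\,x(-t)=h(t)$ together with the periodic condition $u_p(-T)=u_p(T)$, while $u_h$ is a solution of the homogeneous equation $x'(t)+m\,x(-t)=0$ chosen to carry the boundary datum $\lambda$. By linearity $u$ then satisfies the equation with right-hand side $h$, and since $u_p(-T)-u_p(T)=0$ the boundary condition reduces to $u(-T)-u(T)=u_h(-T)-u_h(T)=\lambda$; thus it suffices to produce the correct $u_h$.

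For the homogeneous part I would use the parametrization recalled just before the statement: every solution of $x'(t)+m\,x(-t)=0$ has the form $u_h(t)=a(\cos m\,t-\sin m\,t)$ with $a\in\bR$. Evaluating at $\pm T$ and using the parity of cosine and sine gives $u_h(-T)-u_h(T)=2a\sin(m\,T)$. Imposing that this equal $\lambda$ determines $a=\lambda/(2\sin(m\,T))$, and this is precisely the step where the hypothesis $m\neq k\pi/T$ is used, since it guarantees $\sin(m\,T)\neq0$ and hence solvability for $a$.

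It then remains to recognize this correction term as $\lambda\,\ol G(t,-T)$, i.e.\ to verify the identity
\[
\ol G(t,-T)=\frac{1}{2\sin(m\,T)}\(\cos m\,t-\sin m\,t\)
\]
directly from the explicit formula (\ref{eqgb2}). In the normalized variables $z=t/T$, $y=s/T$, $\a=m\,T$, the node $s=-T$ corresponds to $y=-1$, and for $z\in(-1,1)$ only the branch $-|z|>y$ is active; substituting $y=-1$ collapses $\cos[\a(1+y)-\tfrac{\pi}{4}]$ to $\cos(-\tfrac{\pi}{4})$, and a short expansion of $\cos(\a z+\tfrac{\pi}{4})$ yields the claimed expression. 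With this identity the proposed formula (\ref{solgen}) is established.

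Finally I would settle uniqueness. If $u_1,u_2$ both solved the problem, their difference $w$ would solve the homogeneous equation with $w(-T)-w(T)=0$; by the same parametrization $w(t)=a(\cos m\,t-\sin m\,t)$ with $2a\sin(m\,T)=0$, forcing $a=0$ and hence $w\equiv0$. Since this is a corollary of Proposition \ref{Greenf}, there is no deep obstacle: the only genuine computation is the boundary evaluation of $\ol G$ at $s=-T$, and the single minor subtlety is checking that exactly one branch of the piecewise definition (\ref{eqgb2}) is relevant there, after which everything reduces to elementary trigonometric identities.
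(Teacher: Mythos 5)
Your proposal is correct and follows the same route the paper itself indicates: the lemma is quoted from \cite[Corollary 3.4]{Cab4}, and the discussion immediately following it observes precisely your decomposition, namely that \eqref{solgen} is the particular (periodic) solution of Proposition \ref{Greenf} plus the homogeneous correction $\l\,\ol G(t,-T)$, where $2\sin(m\,T)\,\ol G(t,-T)=\cos m\,t-\sin m\,t$ solves $x'(t)+m\,x(-t)=0$ with $x(-T)-x(T)=1$. Your branch analysis of \eqref{eqgb2} at $y=-1$ and the uniqueness argument via the parametrization $a(\cos m\,t-\sin m\,t)$ are both sound, with the hypothesis $m\neq k\pi/T$ entering exactly where you say it does.
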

Observe that $2\sin m\,T\,\ol G(t,-T)=\cos m\,t-\sin m\,t$, and $\ol G(t,-T)$ is the unique solution of the problem
$$x'(t)+m\,x(-t)=0,\quad x(-T)-x(T)=1,$$  therefore the previous expression has the form of a particular solution --that of the periodic problem-- plus a general solution of the equation $x'(t)+m\,x(-t)=0$.
Hence, if impose the condition $F(x)=c$ on equation \eqref{solgen}, we have that
$$c=F\(\int_{-T}^T\ol G(t,s)h(s)\dif s\)+\l F(\ol G(t,-T))$$
and hence, for
$$\l=\frac{c-F\(\int_{-T}^T\ol G(t,s)h(s)\dif s\)}{F(\ol G(t,-T))},$$
expression \eqref{solgen} is a solution of problem \eqref{progen} as long as $F(\ol G(t,-T))\ne0$ or, which is the same,
$$F(\cos m\,t)\ne F(\sin m\,t).$$\par
We summarize this argument in the following result.
\begin{cor}
Assume $m \neq k \, \pi/T$, $k \in \bZ$, $F\in W^{1,1}(I)'$ such that $F(\cos m\,t)\ne F(\sin m\,t)$. Then problem \eqref{progen} has a unique solution given by
\begin{equation}\label{eqsolgen}u(t):=\int_{-T}^T\ol G(t,s)h(s)\dif s+\frac{c-F\(\int_{-T}^T\ol G(t,s)h(s)\dif s\)}{F(\ol G(t,-T))}\ol G(t,-T),\quad t\in I.\end{equation}
\end{cor}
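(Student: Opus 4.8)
The plan is to exploit the affine structure of the solution set of $x'(t)+m\,x(-t)=h(t)$ and reduce the functional condition $F(x)=c$ to a single scalar equation in one free parameter. First I would recall from Lemma~\ref{corlambda} that, for each $\l\in\bR$, the function
\[
u_\l(t):=\int_{-T}^T\ol G(t,s)h(s)\,\dif s+\l\,\ol G(t,-T)
\]
solves the equation $x'(t)+m\,x(-t)=h(t)$ (it is the solution satisfying the auxiliary condition $x(-T)-x(T)=\l$, which I do not otherwise need). Thus $u_0(t)=\int_{-T}^T\ol G(t,s)h(s)\,\dif s$ is a fixed particular solution, and since $2\sin m\,T\,\ol G(t,-T)=\cos m\,t-\sin m\,t$ spans the one-dimensional space of solutions of the homogeneous equation $x'(t)+m\,x(-t)=0$ identified in the discussion above, the family $\{u_\l:\l\in\bR\}$ is exactly the full solution set of the equation.

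Next I would check the regularity needed to apply $F$: the function $\ol G(\cdot,-T)$ is smooth, and $u_0$ is continuous with $u_0'(t)=h(t)-m\,u_0(-t)\in L^1(I)$, so both lie in $W^{1,1}(I)$ and $F$ acts on them. By linearity of $F$,
\[
F(u_\l)=F\!\left(\int_{-T}^T\ol G(t,s)h(s)\,\dif s\right)+\l\,F\bigl(\ol G(\cdot,-T)\bigr).
\]
Because $2\sin m\,T\,F(\ol G(\cdot,-T))=F(\cos m\,t)-F(\sin m\,t)$, and $\sin m\,T\ne0$ thanks to $m\ne k\pi/T$ while $F(\cos m\,t)\ne F(\sin m\,t)$ by hypothesis, the coefficient $F(\ol G(\cdot,-T))$ is nonzero. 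Hence the scalar equation $F(u_\l)=c$ has the unique root
\[
\l=\frac{c-F\!\left(\int_{-T}^T\ol G(t,s)h(s)\,\dif s\right)}{F(\ol G(\cdot,-T))},
\]
and substituting this value into $u_\l$ produces precisely the expression \eqref{eqsolgen}, which therefore solves problem \eqref{progen}.

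For uniqueness I would take two solutions $u_1,u_2$ of \eqref{progen} and set $w:=u_1-u_2$. Then $w$ solves $x'(t)+m\,x(-t)=0$, so $w(t)=a(\cos m\,t-\sin m\,t)$ for some $a\in\bR$, and the constraint $F(w)=0$ yields $a\,(F(\cos m\,t)-F(\sin m\,t))=0$; the hypothesis $F(\cos m\,t)\ne F(\sin m\,t)$ forces $a=0$, whence $w\equiv0$. Given Lemma~\ref{corlambda}, the computation is largely linear bookkeeping, and the one point genuinely carrying the argument is the reduction to a single scalar equation, which presupposes that $\{u_\l\}$ exhausts the solution set, i.e.\ that the homogeneous solution space is exactly one-dimensional and spanned by $\cos m\,t-\sin m\,t$. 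That fact, already established before the statement, is what both existence (solvability for $\l$) and uniqueness rest on; the remaining subtlety is simply confirming that the non-degeneracy $F(\ol G(\cdot,-T))\ne0$ is equivalent to the stated condition $F(\cos m\,t)\ne F(\sin m\,t)$.
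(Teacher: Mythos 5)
Your proposal is correct and follows essentially the same route as the paper: the paper also derives the corollary by taking the one-parameter family of solutions from Lemma~\ref{corlambda}, imposing $F(x)=c$, and solving the resulting scalar equation for $\l$ under the non-degeneracy condition $F(\ol G(\cdot,-T))\ne0$, which it likewise identifies with $F(\cos m\,t)\ne F(\sin m\,t)$ via $2\sin m\,T\,\ol G(t,-T)=\cos m\,t-\sin m\,t$. The only difference is that you spell out the uniqueness argument (difference of solutions lies in the one-dimensional homogeneous space killed by $F$) and the $W^{1,1}$ regularity needed to apply $F$, both of which the paper leaves implicit.
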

\begin{rem} The condition  $m \neq k \, \pi/T$, $k \in \bZ$ together with the rest of the hypothesis of the corollary is sufficient for the existence of a unique solution of problem \eqref{progen} but is not necessary, as it has been illustrated in Proposition \ref{Greenfa}, because such a condition is only necessary for the existence of $\ol G$.
\end{rem}
\section{Applications}
We now apply the previous results in order to get some specific applications.
\begin{exa} Let $F\in W^{1,1}(I)'\cap\cC_c(I)'$ and assume $F(\cos m\,t)\ne F(\sin m\,t)$. The Riesz Representation Theorem guarantees the existence of a --probably signed-- regular Borel measure of bounded variation $\mu$ on $I$ such that $F(x):=\int_{-T}^{T}x\dif \mu$ and $\|F\|_{\cC_c(I)'}=|\mu|(I)$, where $|\mu|(I)$ is the total variation of the measure $\mu$ on $I$.\par
Let us compute now an estimate for the value of the solution $u$ at $t$.
\begin{align*}
|u(t)| & =\left|\int_{-T}^T\ol G(t,s)h(s)\dif s+\frac{c-F\(\int_{-T}^T\ol G(t,s)h(s)\dif s\)}{F(\ol G(t,-T))}\ol G(t,-T)\right|\\ & \le\sup_{s\in I}|\ol G(t,s)|\|h\|_1+\frac{|c-\int_{-T}^T\int_{-T}^T\ol G(t,s)h(s)\dif s\dif \mu(t)|}{|F(\ol G(t,-T))|}|\ol G(t,-T)|\\ &\le
\sup_{s\in I}|\ol G(t,s)|\|h\|_1+\frac{|c|+\sup_{t,s\in I}|\ol G(t,s)||\mu|(I)\|h\|_1}{|F(\ol G(t,-T))|}|\ol G(t,-T)|
\\ & =\left|\frac{c\,\ol G(t,-T)}{F(\ol G(t,-T))}\right|+\left[\sup_{s\in I}|\ol G(t,s)|+\left|\frac{\ol G(t,-T)}{F(\ol G(t,-T))}\right|\sup_{t,s\in I}|\ol G(t,s)||\mu|(I)\right]\|h\|_1.
\end{align*}
\end{exa}
Define operator $\Xi$  as $\Xi(f)(t):=\int_{-T}^T\ol G(t,s)f(s)\dif s$. And let us consider, for notational purposes, $\Xi(\d_{-T})(t):=\ol G(t,-T)$. Hence, equation \eqref{eqsolgen} can be rewritten as
\begin{equation}\label{solpos}u(t)=\Xi(h)(t)+\frac{c-F\(\Xi(h)\)}{F(\Xi(\d_{-T}))}\Xi(\d_{-T})(t),\quad t\in I.\end{equation}
We now recall a Lemma from \cite{Cab5}.
\begin{lem}[{\cite[Lemma 5.5]{Cab5}}]\label{midpoint} Let $f:[p-c,p+c]\to\bR$ be a symmetric function with respect to $p$, decreasing in $[p,p+c]$. Let $g:[a,b]\to\bR$ be an affine function such that $g([a,b])\subset[p-c,p+c]$. Under these hypothesis, the following hold.
\begin{itemize}
\item If $g(a)<g(b)<p$ or $p<g(b)<g(a)$ then $f(g(a))<f(g(b))$,
\item if $g(b)<g(a)<p$ or $p<g(a)<g(b)$ then $f(g(a))>f(g(b))$,
\item if $g(a)<p<g(b)$ then $f(g(a))<f(g(b))$ if and only if $g(\frac{a+b}{2})<p$,
\item if $g(b)<p<g(a)$ then $f(g(a))<f(g(b))$ if and only if $g(\frac{a+b}{2})>p$.
\end{itemize}
\end{lem}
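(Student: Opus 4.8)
The plan is to exploit the fact that a function symmetric about $p$ and monotone on one side of $p$ is governed entirely by the distance to $p$. Concretely, I would introduce the profile $\phi(r):=f(p+r)$ for $r\in[0,c]$. The monotonicity hypothesis says precisely that $\phi$ is decreasing, and the symmetry of $f$ about $p$ gives the single identity $f(x)=\phi(|x-p|)$ valid for every $x\in[p-c,p+c]$. Because $\phi$ is decreasing, this identity turns every comparison of values of $f$ into a comparison of distances to $p$, namely $f(x)<f(y)$ if and only if $|x-p|>|y-p|$. I would record this equivalence first, since all four assertions are immediate consequences of it.

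For the first two bullets the arguments $g(a)$ and $g(b)$ lie on the same side of $p$, so distances to $p$ are ordered in the same (or opposite) way as the points themselves. For example, if $g(a)<g(b)<p$ then $|g(a)-p|=p-g(a)>p-g(b)=|g(b)-p|$, which by the recorded equivalence yields $f(g(a))<f(g(b))$; the three companion sub-cases are dispatched identically by reading off the signs of the arguments relative to $p$.

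The last two bullets are where the affinity of $g$ enters. When $g(a)<p<g(b)$ the two arguments straddle $p$, so $|g(a)-p|=p-g(a)$ while $|g(b)-p|=g(b)-p$, and the equivalence shows that $f(g(a))<f(g(b))$ is the same as $p-g(a)>g(b)-p$, i.e. as $\tfrac{g(a)+g(b)}{2}<p$. At this point I would invoke the defining feature of affine maps, that they preserve midpoints: $g\!\left(\tfrac{a+b}{2}\right)=\tfrac{g(a)+g(b)}{2}$. Substituting this identity rewrites the criterion exactly as $g\!\left(\tfrac{a+b}{2}\right)<p$, which is the claim. The final case $g(b)<p<g(a)$ is entirely symmetric and reverses the comparison to $g\!\left(\tfrac{a+b}{2}\right)>p$.

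I do not anticipate a genuine obstacle here; the only points requiring care are tracking the correct direction of the inequality after applying the decreasing profile $\phi$ and identifying which argument is the larger one in each straddling case. One caveat is that the strict ``if and only if'' statements in the last two bullets rely on $\phi$ being strictly decreasing, so I would understand ``decreasing'' in the hypothesis in the strict sense, consistent with the strict inequalities appearing throughout the statement.
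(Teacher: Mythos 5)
Your proof is correct. Note that the paper itself gives no proof of this lemma---it is quoted verbatim from \cite[Lemma 5.5]{Cab5}---so there is no internal argument to compare against; your reduction to the profile $\phi(r)=f(p+r)$, the equivalence $f(x)<f(y)\iff|x-p|>|y-p|$, and the midpoint-preservation property of affine maps is exactly the natural (and essentially the standard) route, and your closing caveat that ``decreasing'' must be read in the strict sense is the right reading, since the strict inequalities in all four bullets fail for a merely non-increasing $f$.
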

With this Lemma, we can prove the following proposition.
\begin{pro}
Assume $\a\in(0,\pi/4)$, $F\in W^{1,1}(I)'\cap\cC_c(I)'$ such that $\mu$ is its associated Borel measure and $F(\cos m t)>F(\sin m t)$. Then the solution to problem \eqref{progen} is positive if
\begin{equation}\label{eq1sts}c>\frac{2M|\mu|(I)\|h\|_1}{1-\tan\a}.\end{equation}
\end{pro}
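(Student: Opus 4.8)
The plan is to start from the closed form \eqref{solpos} and reduce the positivity question to a single scalar inequality. First I would substitute the identity $2\sin m\,T\,\ol G(t,-T)=\cos m\,t-\sin m\,t$ (recorded just after Lemma \ref{corlambda}) into both the explicit occurrence of $\Xi(\d_{-T})=\ol G(\cdot,-T)$ and into the denominator $F(\Xi(\d_{-T}))$. The normalising factor $2\sin\a$ then cancels, leaving
\[
u(t)=\Xi(h)(t)+\frac{c-F(\Xi(h))}{P}\,(\cos m\,t-\sin m\,t),\qquad P:=F(\cos m\,t)-F(\sin m\,t).
\]
The hypotheses are tailored so that every sign here is controlled: since $\a\in(0,\pi/4)$ we have $m\,t\in[-\a,\a]\subset(-\tfrac\pi4,\tfrac\pi4)$, whence $\cos m\,t-\sin m\,t=\sqrt2\cos(m\,t+\tfrac\pi4)>0$ on $I$, and $P>0$ by the assumption $F(\cos m\,t)>F(\sin m\,t)$.

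Next I would pin down the extreme values of the factor $\cos m\,t-\sin m\,t$ over $t\in I$. Writing it as $\sqrt2\cos(m\,t+\tfrac\pi4)$ and applying Lemma \ref{midpoint} with the symmetric decreasing profile $\cos$ (symmetric about $0$, decreasing on $[0,\tfrac\pi2]$) and the affine map $g(t)=m\,t+\tfrac\pi4$ --- whose image lies entirely in $(0,\tfrac\pi2)$ precisely because $\a<\pi/4$ --- shows that $\cos m\,t-\sin m\,t$ is monotone on $I$, with minimum $\cos\a-\sin\a>0$ at $t=T$ and maximum $\cos\a+\sin\a$ at $t=-T$. Using the Riesz representation of $F$ this simultaneously bounds $P=\int_{-T}^T(\cos m\,t-\sin m\,t)\dif\mu(t)$ from above by $P\le(\cos\a+\sin\a)|\mu|(I)$.

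It then remains to estimate the two functional quantities. Writing $M:=\sup_{t,s\in I}|\ol G(t,s)|$, the elementary bounds $|\Xi(h)(t)|\le M\|h\|_1$ and $|F(\Xi(h))|\le M|\mu|(I)\|h\|_1$ (exactly as in the Example) apply. Since $\tfrac{2}{1-\tan\a}>1$, the standing assumption \eqref{eq1sts} already forces $c>M|\mu|(I)\|h\|_1$, so $c-F(\Xi(h))\ge c-M|\mu|(I)\|h\|_1>0$; combining this with $\cos m\,t-\sin m\,t\ge\cos\a-\sin\a$ and $P\le(\cos\a+\sin\a)|\mu|(I)$ yields
\[
u(t)\ge -M\|h\|_1+\frac{(c-M|\mu|(I)\|h\|_1)(\cos\a-\sin\a)}{(\cos\a+\sin\a)|\mu|(I)}.
\]
Requiring the right-hand side to be positive is, after clearing denominators and using $1+\tfrac{\cos\a+\sin\a}{\cos\a-\sin\a}=\tfrac{2\cos\a}{\cos\a-\sin\a}=\tfrac{2}{1-\tan\a}$, exactly the hypothesis \eqref{eq1sts}, so $u(t)>0$ for every $t\in I$.

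The main obstacle I anticipate is organising the three competing unknowns --- the size of $\Xi(h)$, the value $F(\Xi(h))$, and the normalisation $P=2\sin\a\,F(\Xi(\d_{-T}))$ --- so that they collapse into the clean constant $\tfrac{2}{1-\tan\a}$. The decisive simplification is the cancellation of $2\sin\a$ in the first step, which replaces the a priori uncontrolled denominator $F(\Xi(\d_{-T}))$ by $P$, a quantity that Lemma \ref{midpoint} lets me bound from above purely in terms of $\a$ and $|\mu|(I)$; after that, everything reduces to the triangle inequality.
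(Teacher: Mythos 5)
Your proof is correct and takes essentially the same route as the paper's: both start from \eqref{solpos}, exploit the positivity and the extreme values of $\ol G(\cdot,-T)$ (your $\cos\alpha\mp\sin\alpha$ are exactly the paper's $\tfrac{1}{2}(\cot\alpha\mp 1)$ after the $2\sin\alpha$ normalisation), use the bounds $|\Xi(h)(t)|\le M\|h\|_1$ and $|F(\Xi(h))|\le M|\mu|(I)\|h\|_1$, and reduce to the identity $1+\frac{\cos\alpha+\sin\alpha}{\cos\alpha-\sin\alpha}=\frac{2}{1-\tan\alpha}$. The only differences are organisational: you bound $u(t)$ from below directly, which forces the extra (correct) observation that \eqref{eq1sts} implies $c-F(\Xi(h))>0$, whereas the paper solves the inequality $u>0$ for $c$ and then bounds the resulting supremum by the triangle inequality, so no sign information on $c-F(\Xi(h))$ is needed there.
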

\begin{proof}
Observe that $\Xi(\d_{-T})(t)>0\sfa t\in I$ for every $\a\in(0,\frac{\pi}{4})$. Hence, if we assume that $u$ is positive,  solving for $c$ in \eqref{solpos}, we have that
$$c>F(\Xi(h))-F(\Xi(\d_{-T}))\frac{\Xi(h)(t)}{\Xi(\d_{-T})(t)}\sfa t\in I.$$
Reciprocally, if this inequality is satisfied, $u$ is positive.

It is easy to check using Lemma \ref{midpoint}, that
$$\min_{t\in I}\ol G(t,-T)=\frac{1}{2}(\cot\a-1)\quad\text{and}\quad\max_{t\in I}\ol G(t,-T)=\frac{1}{2}(\cot\a+1).$$ \par Let $M:=\max_{t,s\in I}\ol G(t,s).$\par Then
\begin{align*} & F(\Xi(h))-F(\Xi(\d_{-T}))\frac{\Xi(h)(t)}{\Xi(\d_{-T})(t)}\le |F(\Xi(h))|+\left|2F(\Xi(\d_{-T}))\frac{\Xi(h)(t)}{\cot\a-1}\right|\\ \le & M|\mu|(I)\|h\|_1+(\cot\a+1)|\mu|(I)\frac{M\|h\|_1}{\cot\a-1}=\frac{2M|\mu|(I)\|h\|_1}{1-\tan\a}.\end{align*}
Thus, a sufficient condition for $u$ to be positive is
$$c>\frac{2M|\mu|(I)\|h\|_1}{1-\tan\a}=:k_1.$$\par
\end{proof}
Condition \eqref{eq1sts} can be excessively strong in some cases, which can be illustrated with the following example.
\begin{exa} Let us assume that $F(x)=\int_{-T}^{T}x(t)\dif t$. For this functional,
$$\frac{2M|\mu|(I)\|h\|_1}{1-\tan\a}=\frac{4MT\|h\|_1}{1-\tan\a}.$$\par
In \cite[Lemma 5.11]{Cab5}, it is proven that $\int_{-T}^{T}\ol G(t,s)\dif t=\frac{1}{m}.$
Hence, we have the following
\begin{align*} & F(\Xi(h))-F(\Xi(\d_{-T}))\frac{\Xi(h)(t)}{\Xi(\d_{-T})(t)}=\int_{-T}^{T}\int_{-T}^{T}\ol G(t,s)h(s)\dif s\dif t-\int_{-T}^{T}\ol G(t,-T)\dif t\frac{\int_{-T}^{T}\ol G(t,s)h(s)\dif s}{\ol G(t,-T)}
\\= & \frac{1}{m}\int_{-T}^{T}h(s)\dif s-\frac{1}{m}\frac{\int_{-T}^{T}\ol G(t,s)h(s)\dif s}{\ol G(t,-T)}\le\frac{1}{m}\int_{-T}^{T}|h(s)|\dif s+\frac{1}{m}\frac{\int_{-T}^{T}\ol G(t,s)|h(s)|\dif s}{\ol G(t,-T)}\\ \le & \(1+\max_{t\in I}\frac{\max_{s\in I}\ol G(t,s)}{G(t,-T)}\)\frac{\|h\|_1}{m}\le\(1+\frac{M}{\min_{t\in I}G(t,-T)}\)\frac{\|h\|_1}{m}=\(1+\frac{2M}{\cot\a-1}\)\frac{\|h\|_1}{m}.\end{align*}
This provides a new sufficient condition to ensure that $u>0$.
$$c>\(1+\frac{2M}{\cot\a-1}\)\frac{\|h\|_1}{m}=:k_2.$$\par

Observe that
$$\frac{k_2}{k_1}=\frac{1+(2M-1)\tan\a}{4M\a}.$$
In order to quantify the improvement of the estimate, we have to know the value of $M$.
\begin{lem} $M=\frac{1}{2}(1 + \csc\a)$.
\end{lem}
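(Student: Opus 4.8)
The plan is to work entirely with the explicit expression \eqref{eqgb2} in the rescaled variables $z=t/T$, $y=s/T$, where $\alpha=mT\in(0,\pi/4)$. Since $\sin\alpha>0$ on this range, maximizing $\ol G$ over $I^2$ is equivalent to maximizing $N(z,y):=\sin(\alpha)\,\ol G(z,y)$ over the square $[-1,1]^2$ and then dividing by $\sin\alpha$. The decisive structural feature of \eqref{eqgb2} is that, on each of the four triangular regions into which the diagonals $y=\pm z$ split the square, $N$ factors as a product $\phi(z)\,\psi(y)$ of a single cosine in $z$ times a single cosine in $y$. For $\alpha\in(0,\pi/4)$ every one of these cosine arguments stays inside $(-\pi/2,\pi/2)$, so each factor is strictly positive; this positivity and separability is what makes the maximization tractable.

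First I would record the monotonicity of the two factors on each region. A direct differentiation shows that on, say, the region $z>|y|$ the factor $\cos[\alpha(1-z)-\tfrac{\pi}{4}]$ is decreasing in $z$ while $\cos(\alpha y-\tfrac{\pi}{4})$ is increasing in $y$, and that $\phi'(z)=0$ forces $z=1-\pi/(4\alpha)<0$, which lies outside the region. Hence $N$ has no interior critical point there and its maximum is attained on the boundary of the (closed) triangle. The same computation disposes of the remaining three regions, so the global maximum of $N$ is attained on the diagonals $y=\pm z$ or on the sides $z=\pm1$, $y=\pm1$ of the square.

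Next I would evaluate $N$ along the diagonal $z=y$. Using the product-to-sum identity $\cos A\cos B=\tfrac12[\cos(A-B)+\cos(A+B)]$, the region $z>|y|$ gives, on $z=y>0$,
$$N(y,y)=\tfrac12\big[\cos(\alpha(1-2y))+\sin\alpha\big],$$
which is maximized at the midpoint $y=\tfrac12$ with value $\tfrac12(1+\sin\alpha)$; the analogous computation on the adjacent region $|z|<y$ returns the smaller value $\tfrac12(1-\sin\alpha)$, the gap $\sin\alpha$ being exactly the jump of $N$ across the diagonal predicted by the unit jump of $\ol G$. Treating the region $y<-|z|$ on $z=y<0$ in the same way again yields $\tfrac12(1+\sin\alpha)$, now attained at $(-\tfrac12,-\tfrac12)$.

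Finally I would check that no side of the square beats this value: on $z=1$, for instance, $N$ reduces to $\tfrac1{\sqrt2}\cos(\alpha y-\tfrac{\pi}{4})$, whose maximum $\tfrac12(\cos\alpha+\sin\alpha)$ is $\le\tfrac12(1+\sin\alpha)$ because $\cos\alpha\le1$, and the other sides are handled identically or by the symmetries of $\ol G$. Collecting the comparisons gives $\max N=\tfrac12(1+\sin\alpha)$, whence $M=\tfrac12(1+\sin\alpha)/\sin\alpha=\tfrac12(1+\csc\alpha)$. The main obstacle is purely organizational: carrying out the four-region casework and, within each region, the edge-by-edge comparison carefully enough to be sure the diagonal value $\tfrac12(1+\sin\alpha)$ is never exceeded; Lemma \ref{midpoint} can be used to streamline the monotonicity bookkeeping exactly as in the computation of the minimum and maximum of $\ol G(t,-T)$ above.
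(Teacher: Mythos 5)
Your proposal is correct, but it follows a genuinely different route from the paper's. The paper does not perform any two-variable maximization: it invokes \cite[Lemma 5.9]{Cab5}, which already supplies the one-variable function $\Phi(y)=\max_{z\in[-1,1]}\ol G(z,y)$ in closed form, notes that $\Phi$ is symmetric so it suffices to work on $[0,1]$, and then finds by elementary differentiation that the maximum is attained at the midpoint, where $\sin(\a)\,\Phi\left(\tfrac{1}{2}\right)=\cos^2\left(\tfrac{\a}{2}-\tfrac{\pi}{4}\right)=\tfrac{1}{2}(1+\sin\a)$, whence $M=\tfrac{1}{2}(1+\csc\a)$. Your argument reconstructs from \eqref{eqgb2} exactly what the paper imports: your positivity/separability/monotonicity analysis re-proves (the relevant part of) the cited lemma, and your product-to-sum computation on the diagonal $z=y$ in the region $z>|y|$ is precisely the paper's $\Phi$ on $[0,1]$, maximized at the same point $y=\tfrac{1}{2}$ with the same value $\tfrac{1}{2}(1+\sin\a)$; all your numerical values (the region-2 value $\tfrac{1}{2}(1-\sin\a)$, the side value $\tfrac{1}{2}(\cos\a+\sin\a)$, the symmetric maximum at $(-\tfrac{1}{2},-\tfrac{1}{2})$) check out. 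What the paper's route buys is brevity, at the cost of resting on an external result; what yours buys is a proof self-contained in the present paper, needing only the displayed formula \eqref{eqgb2}. Two points to tighten when writing it up: (i) the cosine factors are positive because their arguments lie in $\left(-\tfrac{\pi}{2},\tfrac{\pi}{2}\right)$ \emph{on the closure of each region} — e.g.\ in the region $|z|<y$ one needs $y>0$ to keep $\a(y-1)-\tfrac{\pi}{4}>-\tfrac{\pi}{2}$ — so state that restriction explicitly; and (ii) the anti-diagonal edges $z=-y$ are listed in your reduction but never evaluated, and they are not recovered from the diagonal by the symmetry $\ol G(z,y)=\ol G(-y,-z)$ (that symmetry fixes the anti-diagonal pointwise), so they need their own product-to-sum evaluation; it is the identical routine computation and yields at most $\tfrac{1}{2}(\cos\a+\sin\a)\le\tfrac{1}{2}(1+\sin\a)$, so the conclusion is unaffected.
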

\begin{proof}
By \cite[Lemma 5.9]{Cab5} we know that, after the change of variable $t=Tz$, $y=Ts$,
$$(\sin\a)\Phi(y)=\max_{z\in[-1,1]}\ol G(z,y)=\begin{cases}
\cos\left[\alpha(y-1)+\frac{\pi}{4}\right]\cos\(\alpha y-\frac{\pi}{4}\) & \text{if}\quad y\in\left[0,1\right], \\
\cos\(\alpha y+\frac{\pi}{4}\)\cos\left[\alpha(y+1)-\frac{\pi}{4}\right] & \text{if}\quad y\in[-1,0). \\
\end{cases}
$$
\par
Observe that $\Phi$ is symmetric, hence, it is enough to study it on $[0,1]$. Differentiating and equalizing to zero it is easy to check that the maximum is reached at $z=\frac{1}{2}$.
\end{proof}
Thus,
$$f(\a):=\frac{k_2}{k_1}=\frac{1}{2\a}\cdot\frac{1+\sec\a}{1+\csc\a}.$$
$f$ is strictly decreasing on $\(0,\frac{\pi}{4}\)$, $f(0^+)=1$ and $f\(\frac{\pi}{4}^-\)=\frac{2}{\pi}$.
\begin{figure}[hht]
\center{\includegraphics[width=.5\textwidth]{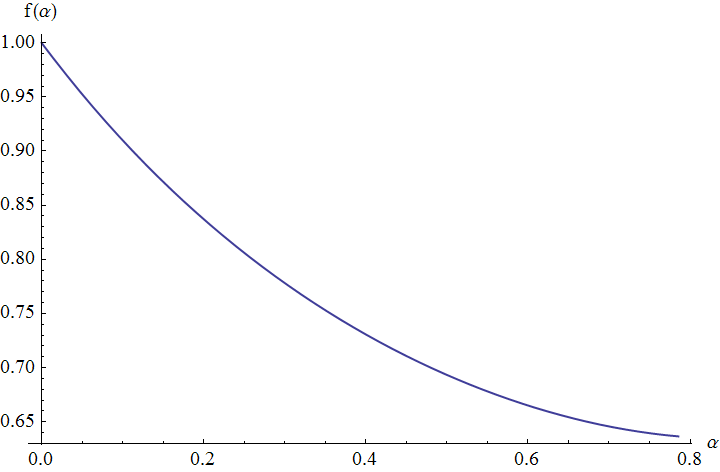}}\caption{$\frac{k_2}{k_1}$ as a function of  $\a$.}
\end{figure}\par
\end{exa}
\begin{figure}[b]
\center{\includegraphics[width=.5\textwidth]{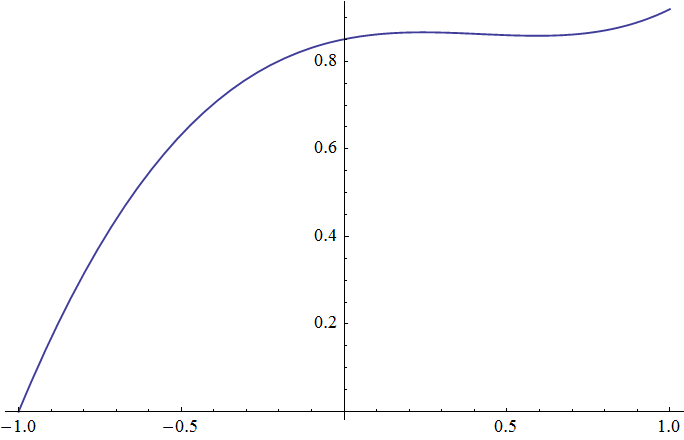}}\caption{Solution of problem \eqref{eqej1} for $c=0.850502\dots$}
\end{figure}\par
\begin{exa}We give now an example for which we compute the optimal constant $c$ that ensures the solution is positive and compare it to the aforementioned estimate. Consider the problem
\begin{equation}\label{eqej1} x'(t)+\,x(-t)=e^t,\ t\in\left[-\frac{1}{2},\frac{1}{2}\right],\quad \int_{-\frac{1}{2}}^\frac{1}{2}x(s)\dif s=c.\end{equation}
\end{exa}

For this specific case,
$$k_2=\frac{\cos\a+1}{\cos\a-\sin\a}\frac{\|h\|_1}{m}=\frac{2\cot\frac{1}{4}\sinh\frac{1}{2}}{\cot\frac{1}{4}-1}=4.91464\dots$$

Now, using the expression of $\ol G$, it is clear that
$$u(t)=\sinh t+\frac{c}{2\sin\frac{1}{2}}(\cos t-\sin t)$$
is the unique solution of problem \eqref{eqej1}. It is easy to check that the minimum of the solution is reached at $-1$ for $c\in[0,1]$. Also that the solution is positive for $c>2\sin\frac{1}{2}\sinh 1/(\cos 1+\sin 1)=0.850502\dots$, which illustrates that the estimate is far from being optimal.

\end{document}